\documentclass[10pt]{elsarticle}
\usepackage{amsmath,amssymb,amsfonts,amsthm}
\usepackage{graphicx}
\usepackage[svgnames]{xcolor}
\usepackage{tikz}
\usetikzlibrary{decorations.markings}
\usetikzlibrary{shapes.geometric}

\pgfdeclarelayer{edgelayer}
\pgfdeclarelayer{nodelayer}
\pgfsetlayers{edgelayer,nodelayer,main}

\tikzstyle{Subgraph}=[circle, thick, draw=black, fill=white, scale=2]
\tikzstyle{Small}=[circle,thick, draw=black,fill=black,scale=1]
\tikzstyle{Big}=[circle,thick, draw=black,fill=white,scale=2]
\tikzstyle{new}=[diamond,thick, draw=black,fill=white,scale=1.5]
\tikzstyle{Directed}=[{thick,decoration={
  markings,
  mark=at position .5 with {\arrow{>}}},postaction={decorate}}]







\newtheorem{thm}{Theorem}

\newtheorem{lemma}[thm]{Lemma}

\newtheorem{corollary}[thm]{Corollary}

\theoremstyle{remark}
\newtheorem{rmk}{Remark}[section]

\pgfdeclarelayer{edgelayer}
\pgfdeclarelayer{nodelayer}
\pgfsetlayers{edgelayer,nodelayer,main}

\tikzstyle{Subgraph}=[fill=white, draw=black, shape=circle]
\tikzstyle{smallBlack}=[fill=black, draw=black, shape=circle, minimum size=2mm]
\tikzstyle{grayedOut}=[fill={rgb,255: red,191; green,191; blue,191}, draw=black, shape=circle, minimum size=2mm]

\tikzstyle{Deleted}=[-, dashed]
\tikzstyle{new edge style 1}=[-]

\usepackage{enumitem}

\newlist{proofcases}{enumerate}{7}
\setlist[proofcases]{label*=\arabic*.,leftmargin=*}%

\numberwithin{equation}{section} \numberwithin{thm}{section}
\numberwithin{rmk}{section} \numberwithin{figure}{section}





\newcommand{\diam}{\text{diam}}

\newcommand{\ordiam}{\overrightarrow{\text{diam}}}
\newcommand{\ori}{\overrightarrow}
\newcommand{\oriback}{\overleftarrow}

\begin{document}

\title{Large Girth and Small Oriented Diameter Graphs}


\author[Cochran]{Garner Cochran}
\address{Department of Mathematics and Computer Science\\ Berry College\\2277 Martha Berry Hwy NW\\  Mt Berry GA 30149\\ USA}
\ead{gcochran@berry.edu}


\begin{abstract}
	
	In 2015, Dankelmann and Bau proved that for every bridgeless graph $G$ of order $n$ and minimum degree $\delta$ there is an orientation of diameter at most $11\frac{n}{\delta+1}+9$. In 2016, Surmacs reduced this bound to $7\frac{n}{\delta+1}.$ In this paper, we consider the girth of a graph $g$ and show that for any $\varepsilon>0$ there is a bound of the form $(2g+\varepsilon)\frac{n}{h(\delta,g)}+O(1)$, where $h(\delta,g)$ is a polynomial. Letting $g=3$ and $\varepsilon<1$ gives an inprovement on the result by Surmacs.

\begin{keyword}
		diameter \sep oriented diameter \sep orientation \sep oriented graph \sep distance \sep size \sep girth
	\end{keyword}	
\end{abstract}

\maketitle 

\section{Definitions}

Let $G=(V,E)$ denote a finite simple graph with vertex set $V$ and edge set $E\subseteq \binom{V}{2}$. Given $G=(V,E)$, a subgraph $H$ of $G$, denoted $H\subseteq G$, is a graph $H=(V',E')$ for which $V'\subseteq V$ and $E'\subseteq E\cap \binom{V'}{2}$. By $|G|$ we mean the order of $G$, $|V(G)|$. A digraph $\overrightarrow{G}=(V,A)$ is a graph with a vertex set $V$ and an arc set $A$ where each arc is oriented and the orientation of the arc $a$ with ends $u$ and $v$ is in the direction from $u$ to $v$ will be denoted as $\overrightarrow{uv}$. If a set of arcs $A$ when considered to be unordered is the set $E$, we call $\ori{G}$ an orientation of the graph $G$. A path is defined as $P=(V,E)$, where $V=\{v_0,v_1,\dots,v_n\}$ and $E=\{x_0x_1,x_1x_2,\dots,x_{n-1}x_n\}$. We will denote this path $P=v_0v_1\dots v_n$. Given such a path $P$, a cycle is defined as a graph $G=(V(P),E(P)\cup \{v_0 v_n\}).$ Given an unoriented path $P=v_0v_1\dots v_n$, we denote using $\ori{P}$ the corresponding oriented path from $v_0$ to $v_n$, we will denote using $\oriback{P}$ the oriented path from $v_n$ to $v_0$. Denote the interior of a path $\overline{P}=v_1\dots v_{n-1}.$ Given a graph $G$ and an edge set $E'\subseteq E$, define $G\setminus E'=(V,E\setminus E')$. Given an edge set containing a single edge, $E'=\{e\}$, we may leave off the brackets, i.e. $G\setminus \{e\}=G\setminus e=(V,E\setminus\{e\}).$ We define a forest as a graph containing no cycles. A connected forest is called a tree.

For a set $B\subseteq V(G)$, the induced subgraph of $G$ on the vertex set $B$ is denoted by $G[B]$. That is, $G[B]=\left(B,\binom{B}{2}\cap V\right)$. Given $G$ a simple graph and $v\in V(G)$, the degree of $v$ in $G$ is the number of vertices adjacent to $v$, denoted $\deg(v)=|\{uv \mid u\in V(G), u\neq v, uv\in E(G)\}|.$ The minimum degree of a graph $G$ is $\delta(G)=\min\{\deg(v)\mid v\in V(G)\}$. If the graph $G$ is unambiguous, we let $\delta(G)=\delta$. We define the closed neighborhood of a vertex $v\in V(H)$ in the given subgraph $H$ as, $N_H[v]=\{u \mid u=v \text{ or } uv\in E(H)\}$. The open neighborhood of $v$ in a given subgraph $H$, denoted $N_H(v)$, is defined as $N_H(v)=\{u \mid u\neq v \text{ and } uv\in E(H)\}$. We may also use $N[v]$ and $N(v)$ if the subgraph $H$ is unambiguous. Let $g(G)=g$ be the \textit{girth} of $G$ or the length of the smallest cycle in the graph $G$. 



We define the \textit{distance} from $u$ and $v$ in a graph $G$ or digraph $\ori{G}$ as the minimum number of edges or arcs on a path from $u$ to $v$. We denote this as $\rho_G(u,v)$ or $\rho_{\ori{G}}(u,v)$. If there does not exist a path from $u$ to $v$, we say that $\rho_G(u,v)=\infty$ or $\rho_{\ori{G}}(u,v)=\infty$. We define the \textit{diameter} of $G$ or $\ori{G}$ to be $\diam(G)=\max\{\rho_G(u,v)\mid u,v\in V(G)\}$ and $\diam(\ori{G})=\max\left\{\rho_{\ori{G}}(u,v) \mid u,v\in V(\ori{G})\right\}$ respectively. If $\diam(G)<\infty$, we call $G$ connected. An edge $e\in E(G)$ $(\textrm{or an arc } a\in A(\ori{G}))$ is called a \textit{bridge} if $\diam(G)<\infty$ and $\diam(G\setminus e)=\infty$ $(\textrm{similar for }\ori{G})$. If a graph contains no bridges, we call it \textit{bridgeless}. If $\diam(\ori{G})<\infty$, then we call $\ori{G}$ \textit{strongly connected}.



A classical result, due to Robbins \cite{robbins_theorem_1939}, states that every bridgeless graph has a strongly connected orientation. There may be many such orientations of a graph. A natural next question is what it may mean to find a ``good'' such orientation. Many notions of an objective for optimality of such orientations may be considered. For the purposes of this paper, given a graph $G$, let $\ori{\mathcal{G}}$ represent the set of all strongly connected orientations of $G$. We wish to minimize the oriented diameter of a graph $G$, defined as the following:
$$\ordiam(G)=\min_{\ori{G}\in \ori{\mathcal{G}}}\diam\left(\ori{G}\right).$$

It was shown by Chv\'atal and Thomassen \cite{chvatal_distances_1978} that finding the oriented diameter of a given graph is NP-complete. In the same paper, Chv\'atal and Thomassen found that for the class of bridgeless graphs with diameter $d$, $\ordiam(G)\leq 2d^2+2d$ and constructed bridgeless graphs of diameter $d$ for which every strong orientation admits a diameter of at least $\frac{1}{2}d^2+d$. The upper bound was improved by Babu, Benson, Rajendraprasad and Vaka \cite{babu_improvement_2021} to  $1.373d^2+6.971d-1$.

The paper by Chv\'atal and Thomassen \cite{chvatal_distances_1978} has led to further investigation of such bounds on the oriented diameter given certain graph parameters, including the diameter \cite{fomin_complexity_2004,hutchison_sharp_2007,kwok_oriented_2010}, the radius \cite{chung_strongly_1985}, the domination number \cite{fomin_at-free_2004,laetsch_bounds_2012}, the maximum degree \cite{dankelmann_oriented_2018}, the minimum degree \cite{bau_diameter_2015,czabarka_degree_2019,surmacs_improved_2017}, the number of edges of the graph \cite{cochran_size_2021}, and other graph classes\cite{chen_diameter_2021,gutin_minimizing_1994,gutin_almost_2002,gutin_orientations_2002,hutchison_sharp_2007,koh_orientation_2005,kumar_oriented_2021,lakshmi_optimal_2011,lakshmi_optimal_2007,lakshmi_optimal_2009,plesnik_remarks_1985,soltes_orientations_1986,wang_oriented_2021}. See the survey by Koh and Tay \cite{koh_optimal_2002} for more information on some of these results.

Erd\H{o}s, Pach, Pollack and Tuza \cite{erdhos_radius_1989} proved that the diameter of connected graphs of order $n$ and minimum degree $\delta$ is at most $\frac{3n}{\delta+1}+O(1)$. Bau and Dankelmann \cite{bau_diameter_2015} sought to investigate a similar bound for the oriented diameter and proved that given a bridgeless graph $G$ of order $n$ and minimum degree $\delta$, $\frac{3n}{\delta+1}\leq \ordiam(G)\leq \frac{11n}{\delta+1}.$ The upper bound was improved to $\frac{7n}{\delta+1}$ by Surmacs \cite{surmacs_improved_2017}.

In this paper, we will consider upper bounds on the oriented diameter of a graph considering both the minimum degree $\delta$ and the girth $g$ of a graph. In particular we will prove the following theorem.

\begin{thm}\label{finalTheorem}
	Given $G=(V,E)$, a bridgeless graph of order $n$ and minimum degree $\delta$, there is a polynomial in $\delta$ and $g$, $h(\delta,g)$ of degree $\lfloor \frac{g-1}{2}\rfloor$, for which, given any choice of $\varepsilon>0$, 
	\[
		\ordiam(G)\leq (2g+\varepsilon) \frac{n}{h(\delta,g)}+c.
	\]
\end{thm}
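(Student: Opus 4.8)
The plan is to combine a Moore-type volume bound for balls in large-girth graphs with the skeleton/ear-decomposition technique used by Bau--Dankelmann and Surmacs. Put $r=\lfloor\frac{g-1}{2}\rfloor$, so $g\ge 2r+1$ and $G$ has no cycle of length $\le 2r$. Hence for every $v\in V$ a breadth-first search from $v$ meets no cycle within its first $r$ levels, so the ball $B^r(v)=\{u:\rho_G(u,v)\le r\}$ contains a full $\delta$-branching tree of depth $r$, giving
\[
  |B^r(v)|\ \ge\ 1+\delta\sum_{i=0}^{r-1}(\delta-1)^i\ =:\ h(\delta,g),
\]
a polynomial in $\delta$ of degree $r=\lfloor\frac{g-1}{2}\rfloor$, as required. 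The same local acyclicity yields the packing fact that powers the count: along any shortest path $v_0v_1\cdots v_k$ of $G$, the balls $B^r(v_i)$ and $B^r(v_j)$ are disjoint whenever $|i-j|\ge 2r+1$, since a common vertex would produce a $v_i,v_j$-walk shorter than the path.

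Next I would construct a ``skeleton'' $H\subseteq G$ that is connected, bridgeless, and $r$-dominating (every vertex of $G$ is within distance $r$ in $G$ of $V(H)$), of order $|V(H)|\le (2g+\tfrac{\varepsilon}{2})\tfrac{n}{h(\delta,g)}+c_1(\varepsilon,g)$. First pick centers $c_1,\dots,c_p$ greedily so that each $c_{i+1}$ has distance $\ge 2r+1$ from all previous ones; maximality makes $\{c_1,\dots,c_p\}$ a $2r$-dominating set, and the packing fact makes the balls $B^r(c_i)$ pairwise disjoint, so $p\le n/h(\delta,g)$. Then join the centers by shortest $G$-paths (each of length $\le 2r+1$, after ordering the centers along a tree) to form a subtree $T$ of $G$; this sharpens the domination radius to $r$. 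Finally replace each bridge of $T$ by, or pad it with, a short cycle of $G$ through that edge, doubling a connecting path where necessary, to obtain a bridgeless $H\supseteq T$. The extra vertices come from the $\le p$ connecting paths and padding cycles, each of bounded length, which is how a second factor of order $2r+1\le g$ — and hence the leading $2g$ — enters, together with the $\tfrac{\varepsilon}{2}$ slack and the additive $c_1$.

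Now orient. By Robbins' theorem $H$ has a strong orientation; organizing $H$ as a path of short cycles and ears and orienting each cyclically, one obtains $\ori H$ with $\diam(\ori H)\le |V(H)|+c_2$, since a forward traversal of the skeleton costs at most one step per vertex and each detour around a cycle costs at most its length. Extend $\ori H$ to an orientation $\ori G$ of $G$ so that every vertex reaches, and is reached from, $V(H)$ within distance $r'=r'(r)$ (orient the distance-$\le r$ attachments outward and inward, doubling a pendant path or inserting a short cycle whenever a bridge would otherwise appear). For all $u,v\in V$,
\[
  \rho_{\ori G}(u,v)\ \le\ \rho_{\ori G}(u,V(H))+\diam(\ori H)+\rho_{\ori G}(V(H),v)\ \le\ |V(H)|+2r'+c_2,
\]
and substituting the bound on $|V(H)|$ yields $\ordiam(G)\le (2g+\varepsilon)\tfrac{n}{h(\delta,g)}+c$ with $c=c(\varepsilon,g)$.

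The main obstacle is the skeleton step: building $H$ so that it is simultaneously $r$-dominating, bridgeless, and of order only $(2g+\varepsilon')\tfrac{n}{h(\delta,g)}+O(1)$. The ball-packing inequality controls only a set of far-apart centers; every device used to reconnect those centers and to destroy the resulting bridges reintroduces vertices that the packing does not see, and keeping that overhead below $\varepsilon'\tfrac{n}{h(\delta,g)}$ — while, in the orientation of $H$, keeping the detours from inflating the leading constant past $2g$ — is the technical heart of the argument, and is exactly why one pays an $\varepsilon$-dependent additive constant for the freedom to take $\varepsilon$ arbitrarily small. Setting $g=3$ forces $r=1$, $h(\delta,3)=\delta+1$, and $2g+\varepsilon=6+\varepsilon$, which would not yet beat $7\tfrac{n}{\delta+1}$; the improvement claimed in the abstract must therefore come from a sharper bookkeeping of the $O(1)$ and of the skeleton order in the small-girth case, which I would carry out separately after the general bound is in place.
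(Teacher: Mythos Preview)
Your architecture matches the paper's: build a small bridgeless subgraph $H$ that dominates $G$ within a bounded radius, orient $H$ strongly (so that $\diam(\ori H)\le |V(H)|$), and extend. The gap is exactly the step you flag as ``the main obstacle'': you never show how to get $|V(H)|\le (2g+\varepsilon)\,p+O(1)$ with $p\le n/h(\delta,g)$. Greedy centre packing gives $p\le n/h(\delta,g)$, and connecting $p$ centres by shortest paths costs roughly $g$ vertices per centre; but the ears you then add to kill bridges are \emph{new} vertices that your packing inequality does not see, and nothing in your sketch bounds their total number by another $g\,p$. Without that, the leading constant is not $2g$ and the argument does not close.

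The paper resolves this by reversing the roles you assign. It does not pack centres first and add ears later; it grows $H$ iteratively by attaching a shortest path $P$ of length $L g$ (with $L=\lceil (g-1)/\varepsilon\rceil$) and then covering $P$ by short ears. The ball centres are taken to be every $g$-th vertex of the \emph{ear} set $\mathrm{cov}(P)$, not of $P$, and the balls are measured in $G\setminus E(P)$ rather than in $G$. An augmentation routine forces any two ear vertices $q_{m_1},q_{m_2}$ to satisfy $\rho_{G\setminus E(P)}(q_{m_1},q_{m_2})\ge |m_1-m_2|$ and $\rho_{G\setminus E(P)}(q_{m_1},H_i)\ge m_1$, so the $\lfloor(g-1)/2\rfloor$-balls around the selected centres are genuinely disjoint and disjoint from the previous forest. (This is also why the paper's polynomial is $h(\delta,g)=1+\delta+\sum_{i=1}^{\lfloor(g-1)/2\rfloor-1}\delta(\delta-3)^i$ rather than your $1+\delta\sum(\delta-1)^i$: two degrees are lost to $E(P)$.) Each new centre then accounts for at most $g$ path vertices \emph{and} at most $g$ ear vertices, which is precisely where the $2g$ comes from; the $\varepsilon$ absorbs the at-most-$(g-1)$ leftover ear vertices against the at-least-$L$ new centres. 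Domination is only to radius $Lg$, not $r$, and the extension lemma pays $4\binom{Lg+1}{2}$ for that.

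Finally, your last paragraph contains an arithmetic slip: with $g=3$ one has $2g+\varepsilon=6+\varepsilon<7$ for any $0<\varepsilon<1$, so the general bound already improves Surmacs' $7n/(\delta+1)$; no separate small-girth bookkeeping is needed.
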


We will also show that in the case of general bridgeless graphs, that $\ordiam(G)\leq (2g+\varepsilon) \frac{n}{\delta+1}+O(1)$. Since bridgeless graphs have a girth $g\geq 3$, we find that if we choose $0<\varepsilon<1$, this gives an improvement on the bound found in the paper by Surmacs \cite{surmacs_improved_2017}.

\section{Preliminaries}
Given a vertex $v\in V(G)$, a natural number $g$, and a path $P$, let $\mathcal{N}(g,v)=\{u \mid \rho_G(u,v)\leq \lfloor \frac{g}{2}\rfloor-1\}$ and $\mathcal{N}(g,v,P)=\{u \mid \rho_{G\setminus E(P)}(u,v)\leq \lfloor \frac{g}{2}\rfloor-1\}.$

\begin{lemma}
	Given a graph $G$ with minimum degree $\delta> 3$, girth $g$, a path $P=p_0p_1\dots p_\ell$, for which $\rho_G(p_i,p_j)=|j-i|$, and a vertex $x\notin V(P),$
	\[
	 	|\mathcal{N}(g,x,P)|\geq 1+\delta+\sum_{i=1}^{\lfloor\frac{g-1}{2}\rfloor-1}\delta(\delta-3)^i.
	 \] 
\end{lemma}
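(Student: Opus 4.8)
The plan is a Moore-type breadth-first counting argument carried out inside the subgraph $H:=G\setminus E(P)$ rooted at $x$. Let $r:=\lfloor\frac{g-1}{2}\rfloor$ be the radius appearing in $\mathcal{N}(g,x,P)$, and for $0\le i\le r$ let $L_i$ denote the set of vertices at distance exactly $i$ from $x$ in $H$, so that $L_0=\{x\}$, the $L_i$ are pairwise disjoint, and $\mathcal{N}(g,x,P)=\bigcup_{i=0}^{r}L_i$. I will prove $|L_0|=1$, $|L_1|\ge\delta$, and the recursion $|L_{i+1}|\ge(\delta-3)|L_i|$ for $1\le i\le r-1$; iterating yields $|L_i|\ge\delta(\delta-3)^{\,i-1}$ for $1\le i\le r$, and then
\[
 |\mathcal{N}(g,x,P)|=\sum_{i=0}^{r}|L_i|\ \ge\ 1+\sum_{i=1}^{r}\delta(\delta-3)^{\,i-1}=1+\delta\sum_{j=0}^{r-1}(\delta-3)^{j}=1+\delta+\sum_{i=1}^{r-1}\delta(\delta-3)^{i},
\]
which is the claimed inequality since $r-1=\lfloor\frac{g-1}{2}\rfloor-1$.

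For the base cases, note that $x\notin V(P)$ means no edge of $P$ is incident with $x$, so $\deg_H(x)=\deg_G(x)\ge\delta$ and $|L_1|=|N_H(x)|\ge\delta$. For the recursion fix $i$ with $1\le i\le r-1$ and $v\in L_i$. As $P$ is a path, $v$ lies on at most two edges of $E(P)$ (two if $v$ is an interior vertex of $P$, one if it is an end of $P$, none if $v\notin V(P)$), so $\deg_H(v)\ge\delta-2$; deleting from this count the edge from $v$ to a chosen breadth-first parent in $L_{i-1}$ leaves at least $\delta-3$ edges of $H$ at $v$, each running into $L_{i-1}\cup L_i\cup L_{i+1}$.

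The crux is to show all these $\ge\delta-3$ edges land in $L_{i+1}$ and that, as $v$ ranges over $L_i$, the endpoints reached are distinct; both follow from the girth, since $H\subseteq G$ forces the girth of $H$ to be at least $g$. If some $v\in L_i$ had a neighbour $u\in L_{i-1}\cup L_i$ other than its parent, then concatenating the breadth-first path from $x$ to $v$, the edge $vu$, and the breadth-first path from $u$ back to $x$, and deleting the common initial segment up to the last shared vertex, yields an honest cycle (the two breadth-first subpaths are internally disjoint past that vertex) of length at most $2i+1\le 2r-1\le g-2<g$, a contradiction. Likewise, if distinct $v,v'\in L_i$ shared a neighbour $w\in L_{i+1}$, the breadth-first paths to $v$ and to $v'$ together with the edges $vw$ and $wv'$ give a cycle of length at most $2i+2\le 2r\le g-1<g$, again impossible. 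Thus each $v\in L_i$ contributes at least $\delta-3$ vertices to $L_{i+1}$, distinct vertices of $L_i$ contribute disjointly, and $|L_{i+1}|\ge(\delta-3)|L_i|$.

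The one genuinely delicate point is bookkeeping the constant $3$ and confirming that the girth inequalities $2i+1<g$ and $2i+2<g$ hold at every level $i\le r-1$ invoked above, uniformly over the parity of $g$; this reduces to $2\lfloor\frac{g-1}{2}\rfloor\le g-1$. The hypothesis $\delta>3$ is exactly what makes $\delta-3\ge1$, so that the levels strictly grow and the bound is non-trivial (for small $g$ the recursion range is empty and the bound is just $1+\delta$); the isometry assumption $\rho_G(p_i,p_j)=|j-i|$ plays no role in this count. Evaluating the geometric sum as displayed then completes the argument.
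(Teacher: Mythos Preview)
Your proof is correct and follows essentially the same approach as the paper: a Moore-type breadth-first count in $H=G\setminus E(P)$ rooted at $x$, using the girth to guarantee that the BFS layers branch without collisions. The paper compresses the layer-by-layer girth checks into the single observation that $G[\mathcal{N}(g,x)]$ (and hence $G[\mathcal{N}(g,x,P)]$) is a tree, whereas you verify the two needed inequalities $2i+1<g$ and $2i+2<g$ explicitly; the content is identical.
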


\begin{proof}
	Given a vertex $x\in V(G)$ for which $x\notin V(P)$, $G[\mathcal{N}(g,x)]$ is a tree. If not, there would be a cycle of length less than $g$ in $G$ a contradiction to $g$ being the girth. Since $G[\mathcal{N}(g,x,P)]\subseteq G[\mathcal{N}(g,x)]$, $G[\mathcal{N}(g,x,P)]$ is also a tree.

	We will construct the set $\mathcal{N}(g,x,P)$. Note that $x\in \mathcal{N}(g,x,P)$. Since $x\notin V(P)$, $N(x)\subseteq \mathcal{N}(g,x,P)$ and $|N(x)|\geq \delta$, so $|\{u \mid \rho_{G\setminus E(P)}(v,u)=1\}|\geq \delta$. For each vertex $v_1\in N(x)$, if $v_1\notin V(P)$, then $|N_{G\setminus E(P)}(v_1)|\geq \delta$. If $v_1\in V(P)$, either one or two of the edges incident to $v_1$ are in $E(P)$, so $|N_{G\setminus E(P)}(v_1)|\geq (\delta-2)$. Since $x\in N(v_1)$ we have that $|\{u \mid \rho_{G\setminus E(P)}(v,u)=2\}|\geq \delta(\delta-3).$ Since $\mathcal{N}(g,x,P)$ is a tree, as long as $1\leq i\leq \lfloor\frac{g-1}{2}\rfloor-1$, we can perform a similar analysis to show that $|\{u \mid \rho_{G\setminus E(P)}(v,u)=i+1\}|\geq \delta(\delta-3)^{i}.$ Hence, $|\mathcal{N}(g,x,P)|\geq 1+\delta+\sum_{i=1}^{\lfloor\frac{g-1}{2}\rfloor-1}\delta(\delta-3)^i$.
\end{proof}




 \section{Introduction of Main Lemma}


Let $h(\delta,g)=1+\delta+\sum_{i=1}^{\lfloor\frac{g-1}{2}\rfloor-1}\delta(\delta-3)^i.$ For any $\varepsilon>0$, let $L=\lceil\frac{g-1}{\varepsilon}\rceil$.

\begin{lemma}\label{mainlemma}
	Given a bridgeless graph $G$ with $|G|=n$, girth $g$ and minimum degree $\delta=\delta(G)$, there exists a set of increasing bridgeless subgraphs $H_0\subset H_1\subset H_2\subset \dots H_k \subseteq G$, vertex sets $B_0\subset B_1\subset \dots$ for which $B_i\subseteq V(H_i)$, and a set of forests $F_i$ for which the following hold:
		\begin{enumerate}
			\item \label{Hkspecial} For all $v\in V(G)$, $\rho_G(v,H_k)< L\cdot g$,
			\item \label{forestlarge} for all $i$, $|F_i|\geq h(\delta,g)|B_i|$, and
			\item \label{smallsubgraph} $|H_i|\leq (2g+\varepsilon)|B_i|$.
		\end{enumerate}
\end{lemma}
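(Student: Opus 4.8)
The plan is to construct the three sequences by a greedy loop: start from a shortest cycle, and as long as some vertex of $G$ is still farther than $L\cdot g$ from the current subgraph, absorb into it a long shortest path reaching out toward such a vertex together with just enough extra edges of $G$ to restore bridgelessness, placing new branch vertices at regular intervals along the path. Since every enlargement strictly increases $|H_i|$, the loop terminates after finitely many steps, and it can only terminate because condition~\eqref{Hkspecial} has become true; so the only real work is to maintain conditions~\eqref{forestlarge} and~\eqref{smallsubgraph} as invariants.

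For the base case, let $H_0$ be a shortest cycle of $G$ (so $|H_0|=g$), pick any $b_0\in V(H_0)$, put $B_0=\{b_0\}$, and let $F_0=G[\mathcal{N}(g,b_0)]$. By the girth hypothesis $G[\mathcal{N}(g,b_0)]$ is a tree, and counting neighbourhoods layer by layer exactly as in the proof of the first lemma (but with no path deleted) gives $|\mathcal{N}(g,b_0)|\ge 1+\delta+\sum_{i=1}^{\lfloor(g-1)/2\rfloor-1}\delta(\delta-1)^i\ge h(\delta,g)$; since $g<2g+\varepsilon$, both \eqref{forestlarge} and \eqref{smallsubgraph} hold for $i=0$. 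The invariant I would carry is stronger than what is stated: $H_i$ is bridgeless, $B_i\subseteq V(H_i)$ is a set whose elements are pairwise at $G$-distance at least $g$, and $F_i$ is the vertex-disjoint union of the trees $G[\mathcal{N}(g,b)]$, $b\in B_i$. The distance-$\ge g$ condition makes the balls $\mathcal{N}(g,b)$ pairwise disjoint (their radii $\lfloor g/2\rfloor-1$ sum to at most $g-2<g$) and each of them a tree, so $F_i$ is genuinely a forest with $|F_i|=\sum_{b\in B_i}|\mathcal{N}(g,b)|\ge h(\delta,g)|B_i|$; thus \eqref{forestlarge} is automatic and all that must be checked at each step is \eqref{smallsubgraph}.

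For the inductive step, if every $v\in V(G)$ has $\rho_G(v,H_i)<L\cdot g$ we stop; otherwise fix $w$ with $\rho_G(w,H_i)\ge Lg$ and let $\pi=u_0u_1\cdots u_s$ be a shortest path from $w=u_0$ to $V(H_i)$, so $s\ge Lg$, $u_0,\dots,u_{s-1}\notin V(H_i)$, and $\pi$ is isometric in $G$. Build $H_{i+1}$ by adding to $H_i$ the path $\pi$ together with a bridgeless ``closure'': working block by block of $G$ along $\pi$, for each block $D$ that $\pi$ meets add a shortest path of $D$ joining the two points where $\pi$ enters and leaves $D$, internally disjoint from $\pi$, so that $H_i\cup\pi$ together with these arcs is a chain of cycles glued to $H_i$ and hence bridgeless. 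Put new branch vertices at $u_0,u_g,u_{2g},\dots$, retaining only those $u_{jg}$ with $jg\le s-g$; by isometry of $\pi$ they are pairwise at distance $\ge g$, and $\rho_G(u_{jg},V(H_i))=s-jg\ge g$ shows each is at distance $\ge g$ from $B_i\subseteq V(H_i)$, so the new set $B_{i+1}$ again satisfies the invariant, with $|B_{i+1}|-|B_i|=\lfloor s/g\rfloor$. Take $F_{i+1}$ to be the corresponding disjoint union of girth-balls. Then \eqref{smallsubgraph} reduces to the inequality $|H_{i+1}|-|H_i|\le(2g+\varepsilon)\lfloor s/g\rfloor$.

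That inequality is the crux, and it is exactly where $\delta>3$, the large girth, and the choice $L=\lceil(g-1)/\varepsilon\rceil$ enter. The path $\pi$ itself contributes only $s$ new vertices, comfortably below $2g\lfloor s/g\rfloor$; the whole difficulty is to bound the number of vertices introduced by the bridgeless closure. The large girth is what controls this: the layer-counting of the first lemma bounds how ``fat'' any subgraph of $G$ can be near $\pi$, and combined with the isometry of $\pi$ it forces each added block-arc to stay of length comparable to the $\pi$-segment it spans, so that the closure, when traversed once, is a path quasi-isometric to $\pi$. The $O(1)$ losses — from vertices of $\pi$ too close to $V(H_i)$ to carry a branch vertex, and from the rounding in $\lfloor s/g\rfloor$ — are absorbed by the slack $\varepsilon\lfloor s/g\rfloor\ge\varepsilon L\ge g-1$ that comes from $s\ge Lg$. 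I expect this bookkeeping, making the closure bridgeless, quasi-isometric, and short all at once, to be the main technical obstacle; everything else is the routine neighbourhood count already established and the packing argument keeping the girth-balls disjoint.
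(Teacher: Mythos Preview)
Your overall architecture matches the paper's, and your handling of the forests $F_i$ via the pairwise $G$-distance-$\ge g$ invariant on $B_i$ is fine. The genuine gap is exactly where you flag it, and your heuristic for closing it is wrong. Girth and the isometry of $\pi$ give \emph{lower} bounds on arc lengths, never upper bounds: if the segment $u_a\dots u_b$ of $\pi$ lies in a block $D$, girth says an internally disjoint $u_a$--$u_b$ arc in $D$ has length $\ge g-(b-a)$, and isometry says it has length $\ge b-a$; neither stops it from having length of order $n$. The first lemma likewise gives a \emph{lower} bound on $|\mathcal{N}(g,x,P)|$, so it cannot bound how ``fat'' anything is. Concretely, if $G$ is $2$-connected there is a single block, and the shortest $u_0$--$u_s$ path internally disjoint from $\pi$ can be arbitrarily long compared to $s$; then $|H_{i+1}|-|H_i|$ is uncontrolled while your $|B_{i+1}|-|B_i|=\lfloor s/g\rfloor$ is pinned near $L$, and \eqref{smallsubgraph} fails.

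The missing idea, and the paper's main device, is to place the new $B$-vertices \emph{on the covering arcs} rather than on $\pi$. The paper covers $P$ edge by edge with shortest detour paths in $G\setminus E(P)$, collecting their interior vertices into a labelled sequence $cov(P)=q_1,q_2,\dots$, and then runs an augmentation loop that repeatedly replaces stretches of $cov(P)$ by shorter detours until $\rho_{G\setminus E(P)}(q_r,H_i)\ge r$ and $\rho_{G\setminus E(P)}(q_r,q_{r'})\ge|r-r'|$ for all labels. Every $g$-th $q_r$ then goes into $B_{i+1}$, with the forest built from the balls $\mathcal{N}(g,\cdot,P)$ in $G\setminus E(P)$; the distance conditions guarantee these balls are disjoint from each other and from $F_i$. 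This is self-balancing: a long covering yields proportionally many new $B$-vertices, so $|cov(P)|\le g\,|B_{i+1}\setminus B_i|+(g-1)$ holds automatically, and together with $|P|=gL$ one gets \eqref{smallsubgraph} whenever $|B_{i+1}\setminus B_i|\ge L$. Your scheme of taking $B$-vertices on $\pi$ itself is precisely the paper's fallback for the complementary case, where $cov(P)$ is already short enough that the bound goes through anyway.
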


\begin{proof}
	We will prove by induction on $B_i,F_i,$ and $H_i$. For some $v_0\in V(G)$, let $B_0=\{v_0\}$, $F_0=G[\mathcal{N}(g,v_0)]$, and $H_0=(\{v_0\},\emptyset)$. Certainly property \ref{smallsubgraph} holds. Note that $F_0$ is a tree of order $\sum_{\alpha=0}^{\lfloor \frac{g-1}{2}\rfloor} \delta^\alpha\geq h(\delta,g)$, so property \ref{forestlarge} holds. If property \ref{Hkspecial} holds, we are done.

	Consider $B_i$, $F_i$, $H_i$ for which properties \ref{forestlarge} and \ref{smallsubgraph} hold and property \ref{Hkspecial} does not yet hold. Since property \ref{Hkspecial} does not yet hold, there exists a vertex, $v$, for which $\rho_G(v,H_i)=L\cdot g$. Let $p_0$ be a vertex in $H_i$ for which $\rho_G(v,p_0)=L\cdot g$. Consider a path of shortest length between $p_0$ and $v$, call this path $P=p_0p_1\dots p_{Lg-1}v$ with $v=p_{Lg}$. Let $e_i=p_{i-1}p_i$. Let $H_i'=H_i$. Call $e_j\in E(P)$ \emph{covered} if $e_j$ is not a bridge in $H_i'\cup P$. Let $P_j=p_0\dots p_j$ and $P_j'=p_jp_{j+1}\dots p_{Lg}$. We consider a set of edges $E(P_j)$ to be covered if no edge $e\in E(P_j)$ is a bridge in $H_i'\cup P_j$.   We will build a set of vertices $cov(P)\subseteq V(G)\setminus (V(P)\cup V(H_i)$ which is incident to all the edges used to cover $E(P)$.

	To expand $H_i'$, note that $e_1$ is not covered in $H_i'\cup P$. Since $G$ is bridgeless, there must be a path from $H_i'$ to $P_1'$. Consider a path of length $\rho_{G\setminus E(P)}(H_i',P_1')$, call it $Q$. Note that the two end vertices of $Q$ are the only vertices in $V(Q)$ which can intersect with $V(P).$ Let $p_\beta$ be the end vertex of $Q$ on $P\setminus p_0$. Add $Q$ and $P_\beta$ to $H_i'$. Add the set of interior vertices of $Q$, $V\left(\overline{Q}\right),$ to $cov(P)$, a set of vertices which will eventually be incident to all the edges used to cover $P$. Label the vertices in $cov(P)$ as $q_r$ such that $r=\rho_{G\setminus E(P)}(H_i,q_r)$. Let $B_i'=B_i$. We will now consider an algorithm that will add to $cov(P)$, $B_i'$, and $H_i'$.

	\begin{enumerate}
		\item \label{initial} If there is no longer an edge left uncovered, terminate the algorithm. 
		\item \label{uncovered} If there is an uncovered edge in $P$, consider the edge $e_j$ with the smallest index $j$ that is not yet covered.  Since $G$ is bridgeless, there exists a path from $H_i'$ to $P_j'$ of length $\rho_{G\setminus E(P)}(H_i',P_j')$, call it $R$. Add $V\left(\overline{R}\right)$ to $cov(P)$. Label the vertices $v\in V\left(\overline{R}\right)$ as $q_r$ where $r=|cov(P)|+\rho_{G\setminus E(P)}(H_i',v)$. Add $R$ and $P_j$ to $H_i'$. 
		\item \label{augmentationbegin} If for all pairs of vertices $q_{m_1},q_{m_2}\in cov(P)$ we have $\rho_{G\setminus E(P)}(q_{m_1},H_i)\geq m_1$ and $\rho_{G\setminus E(P)}(q_{m_1},q_{m_2})\geq|m_2-m_1|$, then return to step \ref{initial}. If this was not the case, consider one of the following augmentations.
		\begin{enumerate}
			\item \label{augment1} If $\rho_{G\setminus E(P)}(q_{m_1},H_i)=s< m_1$, remove $\{q_1,\dots q_{m_1-1}\}$ and any edges incident to that vertex set from $H_i'$ and $cov(P)$. Consider a path $S$, which is edge disjoint from $P$ between $q_{m-1}$ and $H_i$ of length $\rho_{G\setminus E(P)}(q_{m_1},H_i)=s$. Add this path to $H_i'$, add the vertices in $V\left(\overline{S}\right)$ to $cov(P)$, and label them $q_\ell$ such that $\ell=\rho_{G\setminus E(P)}(H_i,q_\ell)$. For values from $m_1$ to $t$, where $t$ is the highest current label $r$ for $q_r$ in $cov(P)$, relabel $q_{m_1}\dots q_{t}=q_{s}\dots q_{t-(m_1-s)}$. After relabeling, return to step \ref{augmentationbegin}.
			\begin{figure}
				\scalebox{0.75}{\begin{tikzpicture}
	\begin{pgfonlayer}{nodelayer}
		\node [style=Subgraph] (0) at (-16, 0) {$H_i$};
		\node [style=smallBlack, label={below:$p_1$}] (1) at (-11, 0) {};
		\node [style=smallBlack, label={below:$p_2$}] (2) at (-6, 0) {};
		\node [style=smallBlack, label={below:$p_3$}] (4) at (-3, 0) {};
		\node [style=smallBlack, label={above:$q_1$}] (6) at (-14.75, 1.25) {};
		\node [style=smallBlack, label={above:$q_3$}] (9) at (-12.25, 1.25) {};
		\node [style=smallBlack, label={above:$q_4$}] (10) at (-9.75, 1.25) {};
		\node [style=smallBlack, label={above:$q_6$}] (13) at (-7.25, 1.25) {};
		\node [style=smallBlack, label={left:$q_{7}$}] (18) at (-16, 3) {};
		\node [style=smallBlack, label={right:$q_{11}$}] (19) at (-3, 3) {};
		\node [style=smallBlack, label={above:$q_{10}$}] (20) at (-5.5, 5.25) {};
		\node [style=smallBlack, label={above:$q_{8}$}] (21) at (-13.5, 5.25) {};
		\node [style=smallBlack, label={above:$q_{9}$}] (22) at (-9.5, 5.25) {};
		\node [style=smallBlack, label={above:$q_2$}] (24) at (-13.5, 2.5) {};
		\node [style=smallBlack, label={above:$q_5$}] (25) at (-8.5, 2.5) {};
	\end{pgfonlayer}
	\begin{pgfonlayer}{edgelayer}
		\draw (0) to (1);
		\draw (1) to (2);
		\draw [bend left=45] (18) to (21);
		\draw [bend right=45] (19) to (20);
		\draw (21) to (22);
		\draw (19) to (4);
		\draw (18) to (0);
		\draw (0) to (6);
		\draw (9) to (1);
		\draw (1) to (10);
		\draw (13) to (2);
		\draw (6) to (24);
		\draw (24) to (9);
		\draw (10) to (25);
		\draw (25) to (13);
		\draw (2) to (4);
		\draw (22) to (20);
	\end{pgfonlayer}
\end{tikzpicture}}

				\scalebox{0.75}{\begin{tikzpicture}
	\begin{pgfonlayer}{nodelayer}
		\node [style=Subgraph] (0) at (-6.5, 0) {$H_i$};
		\node [style=smallBlack, label={below:$p_1$}] (1) at (-1.5, 0) {};
		\node [style=smallBlack, label={below:$p_2$}] (2) at (3.5, 0) {};
		\node [style=smallBlack, label={below:$p_3$}] (3) at (6.5, 0) {};
		\node [style=grayedOut] (4) at (-5.25, 1.25) {};
		\node [style=grayedOut] (5) at (-2.75, 1.25) {};
		\node [style=grayedOut] (6) at (-0.25, 1.25) {};
		\node [style=grayedOut] (7) at (2.25, 1.25) {};
		\node [style=smallBlack, label={left:$q_{1}$}] (8) at (-6.5, 3) {};
		\node [style=smallBlack, label={right:$q_{5}$}] (9) at (6.5, 3) {};
		\node [style=smallBlack, label={above:$q_{4}$}] (10) at (4, 5.25) {};
		\node [style=smallBlack, label={above:$q_{2}$}] (11) at (-4, 5.25) {};
		\node [style=smallBlack, label={above:$q_{3}$}] (12) at (0, 5.25) {};
		\node [style=grayedOut] (13) at (-4, 2.5) {};
		\node [style=grayedOut] (14) at (1, 2.5) {};
	\end{pgfonlayer}
	\begin{pgfonlayer}{edgelayer}
		\draw (0) to (1);
		\draw (1) to (2);
		\draw [bend left=45, looseness=1.00] (8) to (11);
		\draw [bend right=45, looseness=1.00] (9) to (10);
		\draw (11) to (12);
		\draw (9) to (3);
		\draw (8) to (0);
		\draw [style=Deleted] (0) to (4);
		\draw [style=Deleted] (5) to (1);
		\draw [style=Deleted] (1) to (6);
		\draw [style=Deleted] (7) to (2);
		\draw [style=Deleted] (4) to (13);
		\draw [style=Deleted] (13) to (5);
		\draw [style=Deleted] (6) to (14);
		\draw [style=Deleted] (14) to (7);
		\draw (2) to (3);
		\draw (12) to (10);
	\end{pgfonlayer}
\end{tikzpicture}}
				\caption{The left graph is an example of subgraph $H'$ where step \ref{augment1} will be executed. The right is $H_i'$ after execution of \ref{augment1}.}
			\end{figure}
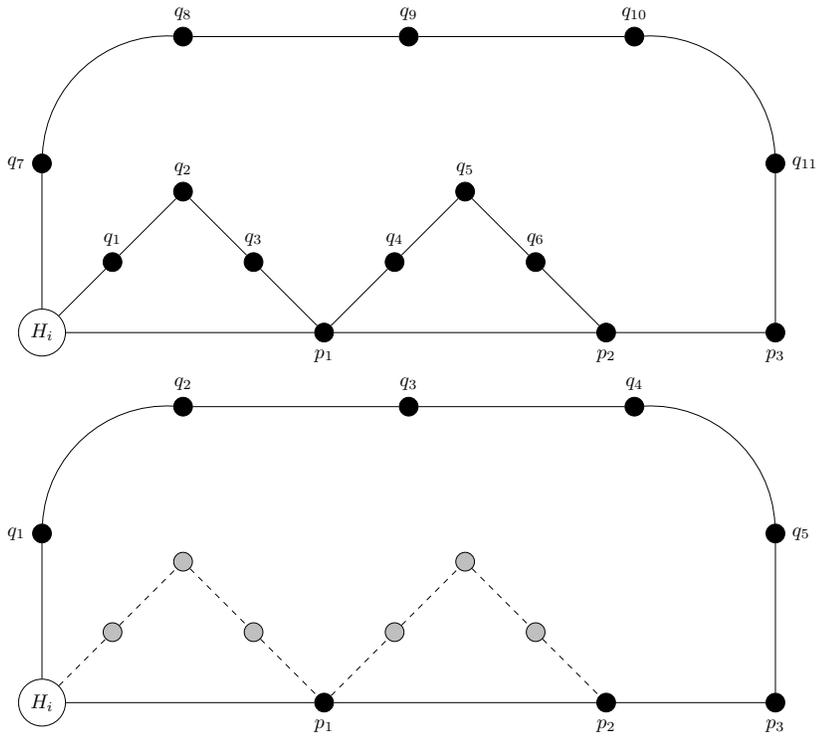
			\item \label{augment2} If $\rho_{G\setminus E(P)}(q_{m_1},q_{m_2})=s<|m_2-m_1|$, without loss of generality, let $m_1<m_2$. Remove the vertices $q_{m_1+1},\dots,q_{m_2-1}$ from $H_i'$ and $cov(P)$. Consider a path $S$, which is edge disjoint from $P$ between $q_{m_1}$ and $q_{m_2}$ of length $\rho_{G\setminus E(P)}(q_{m_1},q_{m_2})=s$. Add this path to $H_i'$, add the vertices in $V\left(\overline{S}\right)$ to $cov(P)$. Label the newly added vertices $q_{m_1+1},\dots,q_{{m_1}+s-1}$ and relabel $q_{m_2+1} \dots q_t=q_{m_1+s}\dots q_{t-((m_2-m_1)-s)}.$ After relabeling, return to step \ref{augmentationbegin}.
			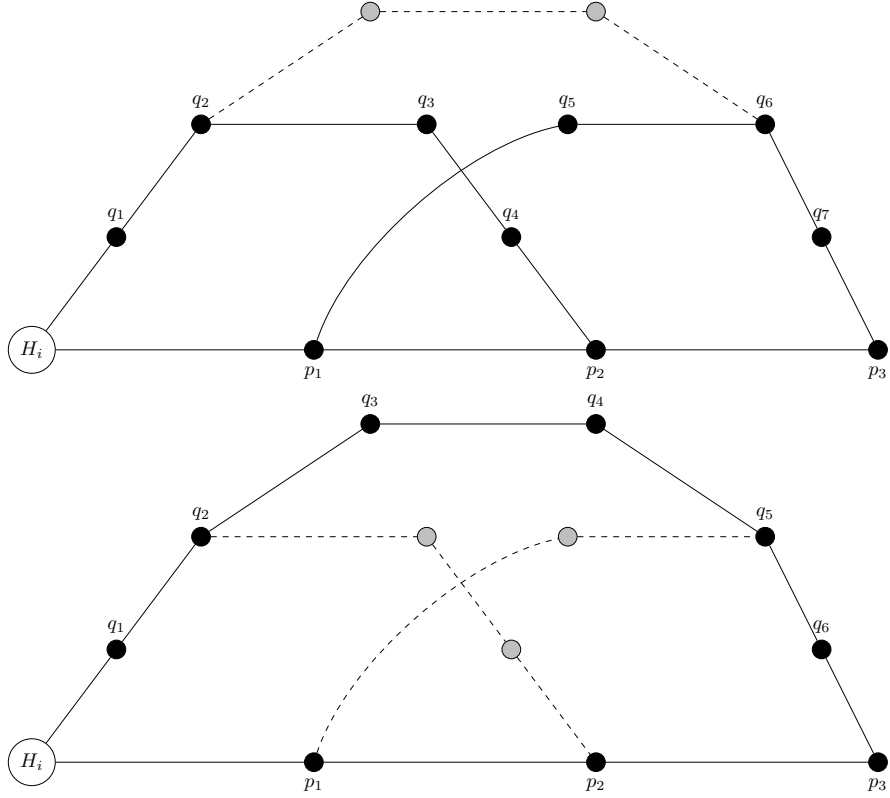
\begin{figure}
				\scalebox{0.75}{\begin{tikzpicture}
	\begin{pgfonlayer}{nodelayer}
		\node [style=Subgraph] (0) at (-16, 0) {$H_i$};
		\node [style=smallBlack, label={below:$p_1$}] (1) at (-11, 0) {};
		\node [style=smallBlack, label={below:$p_2$}] (2) at (-6, 0) {};
		\node [style=smallBlack, label={below:$p_3$}] (3) at (-1, 0) {};
		\node [style=smallBlack, label={above:$q_1$}] (6) at (-14.5, 2) {};
		\node [style=smallBlack, label={above:$q_2$}] (7) at (-13, 4) {};
		\node [style=smallBlack, label={above:$q_3$}] (8) at (-9, 4) {};
		\node [style=smallBlack, label={above:$q_4$}] (9) at (-7.5, 2) {};
		\node [style=smallBlack, label={above:$q_5$}] (10) at (-6.5, 4) {};
		\node [style=smallBlack, label={above:$q_6$}] (11) at (-3, 4) {};
		\node [style=smallBlack, label={above:$q_7$}] (12) at (-2, 2) {};
		\node [style=grayedOut] (13) at (-10, 6) {};
		\node [style=grayedOut] (14) at (-6, 6) {};
	\end{pgfonlayer}
	\begin{pgfonlayer}{edgelayer}
		\draw (0) to (1);
		\draw (1) to (2);
		\draw (2) to (3);
		\draw (0) to (6);
		\draw (6) to (7);
		\draw (7) to (8);
		\draw (8) to (9);
		\draw (10) to (11);
		\draw (11) to (12);
		\draw (9) to (2);
		\draw [bend left, looseness=0.75] (1) to (10);
		\draw (12) to (3);
		\draw [style=Deleted] (7) to (13);
		\draw [style=Deleted] (13) to (14);
		\draw [style=Deleted] (14) to (11);
	\end{pgfonlayer}
\end{tikzpicture}}

				\scalebox{0.75}{\begin{tikzpicture}
	\begin{pgfonlayer}{nodelayer}
		\node [style=Subgraph] (0) at (-16, 0) {$H_i$};
		\node [style=smallBlack, label={below:$p_1$}] (1) at (-11, 0) {};
		\node [style=smallBlack, label={below:$p_2$}] (2) at (-6, 0) {};
		\node [style=smallBlack, label={below:$p_3$}] (3) at (-1, 0) {};
		\node [style=smallBlack, label={above:$q_1$}] (6) at (-14.5, 2) {};
		\node [style=smallBlack, label={above:$q_2$}] (7) at (-13, 4) {};
		\node [style=grayedOut] (8) at (-9, 4) {};
		\node [style=grayedOut] (9) at (-7.5, 2) {};
		\node [style=grayedOut] (10) at (-6.5, 4) {};
		\node [style=smallBlack, label={above:$q_5$}] (11) at (-3, 4) {};
		\node [style=smallBlack, label={above:$q_6$}] (12) at (-2, 2) {};
		\node [style=smallBlack, label={above:$q_3$}] (13) at (-10, 6) {};
		\node [style=smallBlack, label={above:$q_4$}] (14) at (-6, 6) {};
	\end{pgfonlayer}
	\begin{pgfonlayer}{edgelayer}
		\draw (0) to (1);
		\draw (1) to (2);
		\draw (2) to (3);
		\draw (0) to (6);
		\draw (6) to (7);
		\draw [style=Deleted] (7) to (8);
		\draw [style=Deleted] (8) to (9);
		\draw [style=Deleted] (10) to (11);
		\draw (11) to (12);
		\draw [style=Deleted] (9) to (2);
		\draw [style=Deleted, bend left, looseness=0.75] (1) to (10);
		\draw (12) to (3);
		\draw (7) to (13);
		\draw (13) to (14);
		\draw (14) to (11);
	\end{pgfonlayer}
\end{tikzpicture}}
				\caption{The left graph is an example of subgraph $H'$ where step \ref{augment2} will be executed. The right is $H_i'$ after execution of \ref{augment2}.}
			\end{figure}
		\end{enumerate}
	\end{enumerate}


	Any step for which step \ref{augment1} or step \ref{augment2} executes, there was a strict reduction in $|cov(P)|$. On the path $P$, since $\rho(H_i,p_j)=j$, there must be at least $1$ vertex in $cov(P)$, so at some point we must leave step \ref{augmentationbegin} of the algorithm. 
	Any time step \ref{uncovered} executes, there is a strict increase in the number of edges in $P$ that are covered. Since $P$ is finite, at some point the algorithm must return to step \ref{initial} and terminate.

	Let $H_{i+1}=H_i'$, $B_{i+1}=\{B_i\cup q_r|r\equiv 0 \, \mod g\}$, and $F_{i+1}=F_i\bigcup \cup_{b\in B_{i+1}\setminus B_i} \mathcal{N}(g,b,P).$ Now we will show that Properties \ref{forestlarge} and \ref{smallsubgraph} of Lemma \ref{mainlemma} hold. 

To prove Property \ref{smallsubgraph} holds, first remember that $L=\lceil\frac{g-1}{\varepsilon}\rceil\geq \frac{g-1}{\varepsilon}$, hence $g-1\leq L\varepsilon$. We will have two cases: $L \leq |B_{i+1}\setminus B_i|$ and $L > |B_{i+1}\setminus B_i|$. If $L \leq |B_{i+1}\setminus B_i|$, the following holds:
\begin{align}
	\label{inequalities} |H_{i+1}|&\leq |H_{i+1}|+|P|+|cov(P)|\\
	&\leq|H_i|+gL+g |B_{i+1}\setminus B_i| +(g-1)\\
	&\leq |H_i|+g|B_{i+1}\setminus B_i|+g |B_{i+1}\setminus B_i|+L\varepsilon\\
	&\leq |H_i|+g|B_{i+1}\setminus B_i|+g |B_{i+1}\setminus B_i|+ |B_{i+1}\setminus B_i|\varepsilon\\
	&\leq |H_i|+(2g+\varepsilon)|B_{i+1}\setminus B_i|\\
	&\leq (2g+\varepsilon)|B_i| +(2g+\varepsilon)|B_{i+1}\setminus B_i|\\
	\label{inequalitiesend}&\leq (2g+\varepsilon)|B_{i+1}|.
\end{align}

To prove property \ref{forestlarge}, note that for each $b\in B_{i+1}\setminus B_i$, $\rho_{G\setminus E(P)}(b,H_i)\geq g$, otherwise we would have augmented $cov(P)$ in step \ref{augment1} the algorithm, so $\rho_{G\setminus E(P)}(b,B_i)\geq g$. For any pair of vertices $b_1,b_2\in B_{i+1}\setminus B_i$, $\rho_{G\setminus E(P)}(b_1,b_2)\geq g$, otherwise we would have augmented $cov(P)$ in step \ref{augment2} of the algorithm. Hence, $\mathcal{N}(g,b_1,P)\cap \mathcal{N}(g,b_2,P)=\emptyset.$  So, 
\begin{align}
	\label{forestinequalities}\left|F_{i+1}\right|&\geq \left|F_i\right|+\left|\bigcup_{b\in  B_{i+1}\setminus B_i}\mathcal{N}(g,b,P)\right|\\
	&\geq |B_i|\,h(\delta,g)+ |B_{i+1}\setminus B_i|\, h(\delta,g)\\
	\label{forestinequalitiesend}&\geq |B_{i+1}|\,h(\delta,g).
\end{align}

In the case that $L > |B_{i+1}\setminus B_i|$, redefine $B_{i+1}$ to be $B_i\bigcup \cup_{c=1}^L p_{cg}.$ Since $Lg= g|B_{i+1}\setminus B_i| $, the computation above from \ref{inequalities} to \ref{inequalitiesend} holds. See that by definition of $P$, for any $b\in B_{i+1}\setminus B_i$, $\rho_G(b,H_i)\geq g$, so $\mathcal{N}(g,b)\cap V(H_i)=\emptyset$. For any $b_1,b_2\in B_{i+1}\setminus B_i $, $\rho_G(b_1,b_2)\geq g$, hence $\mathcal{N}(g,b_1)\cap \mathcal{N}(g,b_2)=\emptyset$. It follows that 
\begin{align}
	\left|F_{i+1}\right|&\geq \left|F_i\right|+\left|\bigcup_{b\in  B_{i+1}\setminus B_i}\mathcal{N}(g,b)\right|\\
	&\geq |B_i|\,h(\delta,g)+ |B_{i+1}\setminus B_i|\, h(\delta,g)\\
	&\geq |B_{i+1}|\,h(\delta,g).
\end{align}
  Hence, Property \ref{forestlarge} of Lemma \ref{mainlemma} holds in this case.

Since $H_i$ and $B_i$ are increasing subgraphs and vertex sets, and our graph $G$ is a finite graph, eventually \ref{Hkspecial} will hold. When this happens, let $i=k$.
\end{proof}

Now we wish to use Lemma \ref{mainlemma} to create an orientation on a subgraph of $G$ with a small diameter. First, we need to consider the following theorem by Robbins.

\begin{thm}[Robbins\cite{robbins_theorem_1939}]\label{RobbinsTheorem}
	A graph is bridgeless if and only if it admits a strong orientation.
\end{thm}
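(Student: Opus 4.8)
The plan is to prove the two implications separately: necessity is short, and sufficiency carries essentially all the content.

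\textbf{Necessity.} I would argue the contrapositive. Suppose $G$ has a bridge $e$ with ends $u$ and $v$, so that $G\setminus e$ has $u$ and $v$ in different components. In any orientation $\ori{G}$ the single arc replacing $e$ points one way, say from $u$ to $v$. Every $u$–$v$ walk in $G$ must traverse $e$, so in $\ori{G}$ there is no directed walk from $v$ to $u$; hence $\rho_{\ori{G}}(v,u)=\infty$, $\diam(\ori{G})=\infty$, and $G$ admits no strong orientation.

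\textbf{Sufficiency.} Assume $G$ is connected and bridgeless. I would fix a depth-first search tree $T$ rooted at an arbitrary vertex $r$ and use the standard DFS fact that every non-tree edge of $G$ joins a vertex to one of its tree-ancestors. Orient each tree edge from parent to child and each non-tree edge from its descendant endpoint to its ancestor endpoint; call the result $\ori{G}$. To check strong connectivity I would establish two claims: (1) $r$ reaches every vertex $w$, because the unique tree path from $r$ to $w$ is directed downward; and (2) every vertex $v$ reaches $r$, by induction on $\mathrm{depth}(v)$. For (2), if $v\ne r$ consider the tree edge $f$ between $v$ and its parent; since $G$ is bridgeless, $f$ is not a bridge, so some edge other than $f$ leaves the subtree $T_v$, and being a non-tree edge it must run from a vertex $x\in T_v$ up to a proper ancestor $y$ of $v$. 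Descend $T$ from $v$ to $x$ (downward, hence directed), cross the back edge to $y$, and apply the inductive hypothesis since $\mathrm{depth}(y)<\mathrm{depth}(v)$. Concatenating (1) and (2) gives a directed walk $u\to r\to w$ for all $u,w$, so $\ori{G}$ is strongly connected.

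\textbf{Expected main obstacle.} The delicate step is (2): making precise that a non-bridge tree edge is always ``spanned'' by some back edge, and that iterating these back edges strictly decreases depth so the climb to $r$ terminates. If I wanted to avoid the DFS bookkeeping, the alternative is to invoke the ear-decomposition characterization of $2$-edge-connected graphs — a cycle together with a sequence of open ears (paths with two distinct endpoints in the current graph) and closed ears (cycles meeting the current graph in a single vertex) — and induct on the number of ears: orient the base cycle consistently, each open ear as a directed path between its attachment points, each closed ear consistently, and verify at each stage that strong connectivity is preserved. Either way, the real work is the structural description of bridgeless graphs; once that is available, producing and verifying the orientation is routine.
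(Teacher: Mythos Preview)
Your argument is correct. The necessity direction is fine, and for sufficiency the DFS orientation (tree edges downward, back edges upward) together with the induction on depth is a standard and valid proof; the point you flag as the ``delicate step'' is exactly right, and your justification --- that a non-bridge tree edge forces some non-tree edge to exit the subtree, which in a DFS tree must be a back edge to a strict ancestor --- is the correct one. The ear-decomposition alternative you sketch would also work.

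That said, there is nothing to compare against: the paper does not prove Robbins' theorem. It is stated with a citation to Robbins' original 1939 paper and used as a black box (specifically, to guarantee a strong orientation of the subgraph $H_k$ in Lemma~\ref{calculationLemma}). So your proposal is not a reconstruction of the paper's proof but an independent, correct proof of a quoted classical result.
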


\begin{lemma}\label{calculationLemma}
	Let $H_k\subseteq G$, $B_k\subseteq V(G)$, and $F_k\subseteq G$, and Properties \ref{Hkspecial}, \ref{forestlarge}, and \ref{smallsubgraph} of Lemma \ref{mainlemma} hold. There exists an orientation of $H_k$, $\ori{H_k}$ for which $\diam\left(\ori{H_k}\right)\leq (2g+\varepsilon) \frac{n}{h(\delta,g)}$.
\end{lemma}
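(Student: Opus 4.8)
The plan is to apply Robbins' Theorem (Theorem~\ref{RobbinsTheorem}) to obtain a strong orientation $\ori{H_k}$ of the bridgeless graph $H_k$, and then to bound its diameter by a direct counting argument using Properties~\ref{forestlarge} and~\ref{smallsubgraph} of Lemma~\ref{mainlemma}. Since $H_k$ is bridgeless (it appears in the chain of bridgeless subgraphs guaranteed by Lemma~\ref{mainlemma}), such a strong orientation exists, so $\diam(\ori{H_k})$ is finite; the only work is to show it is not too large. The key observation is that $\diam(\ori{H_k})$ is trivially at most $|H_k|-1$ (any shortest directed path visits each vertex at most once), and then I chain together the three size inequalities: $|H_k| \le (2g+\varepsilon)|B_k|$ from Property~\ref{smallsubgraph}, $|B_k| \le |F_k|/h(\delta,g)$ from Property~\ref{forestlarge}, and $|F_k| \le n$ because $F_k$ is a subgraph of $G$ (here one should note $F_k$ is a forest on a subset of $V(G)$, so $|V(F_k)| \le |V(G)| = n$). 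Combining these gives
\[
	\diam(\ori{H_k}) \le |H_k| - 1 \le (2g+\varepsilon)|B_k| \le (2g+\varepsilon)\frac{|F_k|}{h(\delta,g)} \le (2g+\varepsilon)\frac{n}{h(\delta,g)},
\]
which is the claimed bound (in fact with a $-1$ to spare).

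First I would invoke Theorem~\ref{RobbinsTheorem} to fix a strong orientation $\ori{H_k}$, recording that strong connectivity means $\diam(\ori{H_k}) < \infty$. Second, I would justify the crude diameter bound $\diam(\ori{H_k}) \le |H_k| - 1$: for any ordered pair $u,v$ of vertices of $H_k$ there is a directed path from $u$ to $v$, and a shortest such path is internally vertex-simple, hence uses at most $|H_k|$ vertices and so at most $|H_k|-1$ arcs. Third, I would substitute the three inequalities from the hypotheses in the order above, being slightly careful that Property~\ref{forestlarge} is stated as $|F_k| \ge h(\delta,g)|B_k|$, which rearranges to $|B_k| \le |F_k|/h(\delta,g)$ since $h(\delta,g) > 0$, and that $|F_k| \le n$ follows from $V(F_k) \subseteq V(G)$.

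I do not expect a serious obstacle here; the lemma is essentially a bookkeeping step that converts the structural output of Lemma~\ref{mainlemma} into a numerical diameter bound. The only point requiring a moment's care is the interpretation of $|F_k|$: it denotes the order (number of vertices) of the forest $F_k$, and since $F_k$ was built as a union of induced subgraphs $G[\mathcal{N}(g,b,P)]$ on vertices of $G$, we indeed have $|F_k| \le n$. If one wanted to be scrupulous one could also remark that $H_k$ has at least two vertices as soon as the construction has taken a single step, so that the $-1$ in $|H_k|-1$ is legitimate and in any case only helps; but even the trivial case $|H_k| = 1$ gives diameter $0$, which is below the bound. With these remarks in place the displayed chain of inequalities completes the proof.
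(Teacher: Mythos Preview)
Your proposal is correct and follows essentially the same route as the paper: apply Robbins' Theorem to get a strong orientation of the bridgeless $H_k$, bound its diameter by $|H_k|$ (you use the slightly sharper $|H_k|-1$, which is fine), and then chain Properties~\ref{smallsubgraph} and~\ref{forestlarge} together with $|F_k|\le n$ to obtain the stated bound. The paper's proof is the same argument in the same order, just a touch less verbose.
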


\begin{proof}
	By Property \ref{forestlarge} of Lemma \ref{mainlemma} we have that $h(\delta,g)|B_k|\leq |F_k|\leq n$, so we find that $|B_k|\leq \frac{n}{h(\delta,g)}.$ In conjunction with Property \ref{smallsubgraph} of Lemma \ref{mainlemma}, we find that $|H_k|\leq (2g+\varepsilon)|B_k|\leq (2g+\varepsilon) \frac{n}{h(\delta,g)}.$

	Hence, there exists a bridgeless subraph $H_k$ for which $|H_k|\leq (2g+\varepsilon) \frac{n}{h(\delta,g)}.$ By Theorem \ref{RobbinsTheorem}, there is strong orientation of $H_k$, $\ori{H_k}.$ Note that $\diam\left(\ori{H_k}\right)\leq |H_k| \leq (2g+\varepsilon)\frac{n}{h(\delta,g)}.$
\end{proof}

We now wish to extend our result in Lemma \ref{calculationLemma} for $H_k\subseteq G$ to $G$. To do so, we will need to consider an extension to the following two lemmas, one by Fomin et al. \cite{fomin_at-free_2004} and one by Bau et al. \cite{bau_diameter_2015}

\begin{lemma}[Fomin, Matamala, Prisner and Rapaport \cite{fomin_at-free_2004}]\label{Fomin}
	Let $G$ be a bridgeless graph and $H$ a bridgeless subgraph of $G$ with $\rho_G(v,H)\leq 1$ for all $v\in V(G)$. Given an orientation $\ori{H}$ such that $\diam\left(\ori{H}\right)=d$, then $G$ has an orientation of $d+4$. 
\end{lemma}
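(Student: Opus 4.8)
The plan is to leave the orientation $\ori{H}$ untouched and to orient only the edges incident with a vertex of $S:=V(G)\setminus V(H)$, arranging that every $v\in S$ reaches $V(H)$ along a directed path of length at most $2$ and is reached from $V(H)$ along a directed path of length at most $2$. Once this is done, for arbitrary $x,y\in V(G)$ one concatenates a directed $x$--$V(H)$ path of length $\le 2$, a directed path of length $\le d$ inside $\ori{H}$ between the two endpoints in $V(H)$, and a directed $V(H)$--$y$ path of length $\le 2$, giving $\rho_{\ori{G}}(x,y)\le d+4$ (and $\le d+2$ when $x$ or $y$ already lies in $V(H)$; and if $S=\emptyset$ the extra edges of $G$ outside $H$ may be oriented arbitrarily). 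The arcs added outside $\ori{H}$ do not affect the distances already available inside $\ori{H}$, so this reduces everything to a local question about the edges meeting $S$.

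For that local question, note that $G$ is connected with $\delta(G)\ge 2$ since it is bridgeless, and that the hypothesis $\rho_G(v,H)\le 1$ says precisely that every $v\in S$ has a neighbour in $V(H)$. Every edge of $G$ with both ends in $S$ has both ends in a single component of $G[S]$, so the edges touching each component of $G[S]$ may be oriented independently. If $\{v\}$ is a component of $G[S]$, all $\ge 2$ of its neighbours lie in $V(H)$; choosing distinct such neighbours $a,b$ and orienting $\ori{av}$, $\ori{vb}$ (the rest arbitrarily) gives $v$ ingress and egress in one step. For a component $C$ with $|C|\ge 2$ the goal is to orient the edges inside $C$ and from $C$ to $V(H)$ so that each $v\in C$ has an out-arc into $V(H)$ or an out-arc to a vertex of $C$ with an out-arc into $V(H)$, and symmetrically for in-arcs. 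Here the two hypotheses combine: domination guarantees every vertex of $C$ has an edge available to $V(H)$ to spend on ingress or on egress, while bridgelessness --- equivalently, after contracting $V(H)$ to a single vertex $h$, the $2$-edge-connectivity of the multigraph $G[C\cup\{h\}]$ (which one checks survives the contraction) --- is what lets the ingress structure and the egress structure live on disjoint edge sets, so that no edge is asked to point two ways at once.

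I expect this component step to be the only real obstacle. A bare application of Theorem~\ref{RobbinsTheorem} to $G[C\cup\{h\}]$ gives a strong orientation but only guarantees directed distance $O(|C|)$ from a vertex of $C$ to $V(H)$, far worse than the required $2$, so the content is to refine it. The route I would take is to split $C$ into the vertices with $\ge 2$ neighbours in $V(H)$, which serve themselves (one $H$-edge in, one out), and the vertices with exactly one neighbour in $V(H)$, which must borrow either ingress or egress from a neighbour in $C$; one then chooses, simultaneously, an in/out assignment of the single $H$-edges and an orientation of $E(G[C])$ so that every borrowing vertex sits at directed distance exactly $1$ from a suitable server and no shared edge receives conflicting demands --- and this is exactly where an ear decomposition (or a Nash--Williams style splitting) of the $2$-edge-connected graph $G[C\cup\{h\}]$ is invoked. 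The constant $4$ is pinned down by the two factors of $2$: weakening the hypothesis to $\rho_G(v,H)\le k$ should give $d+2k+2$ by the same argument, which is the trade-off the remainder of the paper exploits.
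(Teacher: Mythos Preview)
The paper does not give its own proof of Lemma~\ref{Fomin}; it is quoted from \cite{fomin_at-free_2004} and used as motivation for the paper's own Lemma~\ref{extension}. So there is nothing in the paper to compare your argument against directly. That said, the proof of Lemma~\ref{extension} does contain a method for the closely related task, and it is worth noting that it proceeds quite differently from your sketch: rather than decomposing $G[S]$ into components and invoking an ear decomposition of the contracted graph $G[C\cup\{h\}]$, the paper fixes for each $v\in V_1$ a designated edge $vx$ into $H$ and then absorbs the vertices of $V_1$ one at a time, ordered by the value of $\rho_{G\setminus vx}(v,H)$, orienting a short detour through an already-absorbed vertex whose in/out imbalance is favourable. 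Your component-by-component route is conceptually cleaner for $s=1$ (and your $2$-edge-connectivity claim for $G[C\cup\{h\}]$ is correct, since every vertex of $C$ has its own edge to $h$), but the actual orientation step---making every $c\in C$ lie at directed distance at most $2$ to and from $h$---is left as a sketch; this is exactly the content of the Fomin--Matamala--Prisner--Rapaport lemma, and it does require a genuine argument, not just an appeal to Robbins.

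One point in your write-up is wrong rather than merely incomplete. Your final sentence asserts that relaxing to $\rho_G(v,H)\le k$ yields $d+2k+2$ ``by the same argument'', and that this is ``the trade-off the remainder of the paper exploits''. It is neither. Your whole construction hinges on every vertex of $S$ having an edge directly into $V(H)$, which is exactly the hypothesis $k=1$; for $k\ge 2$ vertices at distance $2$ from $H$ have no such edge and the borrowing scheme collapses. Indeed Lemma~\ref{DankelmannBauExtension} gives $d+12$ for $k=2$, and the paper's Lemma~\ref{extension} gives $d+4\binom{s+1}{2}=d+2s(s+1)$ in general---quadratic in $s$, not linear. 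A bound of $d+2k+2$ would be a substantial strengthening of what is known, and nothing in your outline supports it.
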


\begin{lemma}[Bau and Dankelmann\cite{bau_diameter_2015}]\label{DankelmannBauExtension}
	Let $G$ be a bridgeless graph and $H$ a bridgeless subgraph of $G$ such that $\rho_G(v,H)\leq 2$ for all $v\in V(G)$. Let $\ori{H}$ be a strongly connected orientation of $H$ of diameter $d$. Then there exists a strongly connected orientation of $G$ of diameter at most $d+12$ that extends the orientation of $\ori{H}.$
\end{lemma}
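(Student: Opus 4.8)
The plan is to reduce the lemma to a ``two-sided closeness'' property of the orientation we build. Suppose we can produce a strongly connected orientation $\ori G$ of $G$ that retains every arc of $\ori H$ and for which every vertex $x\in V(G)$ satisfies $\rho_{\ori G}(x,V(H))\le 6$ and $\rho_{\ori G}(V(H),x)\le 6$. Then for any $u,v\in V(G)$ we may concatenate a shortest directed path from $u$ to a vertex $h_1\in V(H)$ (length at most $6$), a shortest directed $h_1$-$h_2$ path inside $\ori H$ (length at most $d$, available since $\ori H$ is retained and strongly connected), and a shortest directed path from $h_2$ to $v$ (length at most $6$); this gives $\rho_{\ori G}(u,v)\le d+12$, and when $u$ or $v$ already lies in $V(H)$ the corresponding term $6$ is simply $0$. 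So it suffices to build an orientation with this closeness property.

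To build it, I would grow $H$ to $G$ through an ear decomposition. Since $\ori H$ is strongly connected, $H$ is connected and bridgeless, hence $2$-edge-connected, and as $G$ is bridgeless and connected it is $2$-edge-connected as well; therefore there is a chain $H=G_0\subsetneq G_1\subsetneq\dots\subsetneq G_m=G$ in which each $G_{i+1}$ is obtained from $G_i$ by adding either a single edge with both endpoints in $V(G_i)$, or an \emph{ear} --- a path whose two endpoints lie in $V(G_i)$ and whose interior avoids $V(G_i)$. Single-edge additions are oriented arbitrarily: they preserve strong connectivity and cannot increase any directed distance, so they are harmless. Each ear is oriented as a directed path from one endpoint to the other, which again preserves strong connectivity. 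The only danger is that an interior vertex of an ear becomes far, in the directed sense, from $V(H)$, either because the ear is long or because its endpoints --- though in $V(G_i)$ --- sit deep inside earlier ears, so that distances accumulate along a long chain of ears.

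The hypothesis $\rho_G(v,H)\le 2$ is exactly what forbids this, and the structural heart of the argument is to choose the decomposition so that every ear has length bounded by an absolute constant and, moreover, has both endpoints in $V(H)$ itself, so that no accumulation occurs and orienting the ear as a directed path leaves each of its interior vertices within a bounded number of arcs of a $V(H)$-endpoint in each direction. The mechanism is that a vertex at distance at most $2$ from $V(H)$ which one tries to put on a long ear, or on an ear with no $V(H)$-endpoint, must (by the distance bound together with $2$-edge-connectivity, which supplies a second route to $V(H)$) admit either a chord that splits the long ear into short pieces or an extra short path supplying the missing $V(H)$-endpoint. Concretely I would process the vertices of $V(G)\setminus V(H)$ in order of distance from $V(H)$, repeatedly extracting through the next still-uncovered vertex a short $V(H)$-to-$V(H)$ ear that internally avoids whatever has already been built; arranging the bounds so that the worst interior-to-$V(H)$ and $V(H)$-to-interior directed distances are at most $6$ then yields precisely the closeness property of the first paragraph.

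The main obstacle is this structural claim: one must show that short, $V(H)$-rooted ears together with single edges really do exhaust $G$, dealing with the cases in which the short alternative routes overlap one another or the ear being built, and in which a distance-$2$ vertex forces a careful choice of which of its neighbours serves as the ``way in'' and which as the ``way out.'' This is in essence the Chv\'atal-Thomassen ear-and-cycle technique specialised to radius $2$ (the same technique underlying Lemma~\ref{Fomin}); the bookkeeping is routine but delicate, and it is where the exact constant $12$, rather than a larger one, has to be extracted.
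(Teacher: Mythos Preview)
The paper does not prove this lemma; it is quoted from Bau and Dankelmann's 2015 paper as a known result, exactly like the Fomin--Matamala--Prisner--Rapaport lemma just above it. So there is no ``paper's own proof'' to compare against here.

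On your sketch itself: the reduction in your first paragraph is correct and is indeed the right way to organise the argument. The ear-decomposition strategy is also natural. But what you call ``the main obstacle'' --- that one can always choose an ear decomposition of $G$ over $H$ in which every ear is short \emph{and} has both endpoints in $V(H)$ --- is essentially the entire content of the lemma, and you have not proved it. It is not obvious that both requirements can be met simultaneously: once one $H$-rooted ear has been added, the next uncovered vertex may have all of its short routes to $V(H)$ passing through the interior of that first ear, which would force either a non-$H$-rooted ear (and hence the accumulation you are trying to avoid) or a longer detour. Calling this ``routine but delicate'' is not a substitute for the case analysis, and it is precisely at this point that the constant $12$ rather than some larger constant must be earned. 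As written, your proposal is a plan with the hard step left blank.

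If you want a template, the paper's own Lemma~\ref{extension} generalises the Bau--Dankelmann lemma to arbitrary radius~$s$, and its proof does \emph{not} use an ear decomposition: it absorbs the distance-$1$ layer $V_1=\{v:\rho_G(v,H)=1\}$ vertex by vertex, for each such $v$ finding a second route from $v$ back to the already-oriented part and orienting the resulting short path in whichever direction balances the in- and out-distances. That iterative-layer approach is almost certainly closer to what Bau and Dankelmann actually do, and specialising it to $s=2$ would give you a genuine proof of the present lemma.
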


We have that for any $v\in V(G)$, $\rho_G(v,H_k)\leq Lg$. Since $Lg>2$, we will need to extend this lemma as seen below.

\begin{lemma}\label{extension}
	Let $G$ be a bridgeless graph, $H$ a bridgeless subgraph of $G$, and let $s$ be an integer such that $s\geq 2$ and for all $v\in V(G)$, $\rho_G(v,H)\leq s$. Let $\ori{H}$ be a strongly connected orientation of $H$ of diameter $d$. Then there exists a strongly connected orientation of $G$ of diameter at most $d+4\binom{s+1}{2}$ that extends the orientation of $\ori{H}$.
\end{lemma}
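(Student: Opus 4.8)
The plan is to induct on $s$. The base case is $s=1$, which is exactly Lemma~\ref{Fomin} of Fomin, Matamala, Prisner and Rapaport and gives diameter $d+4=d+4\binom{2}{2}$ (one could equally well start from $s=2$, Lemma~\ref{DankelmannBauExtension}). For the inductive step it is enough to prove a \emph{one-step reduction}: if $\mathcal C$ is a bridgeless subgraph of $G$ with $\rho_G(v,\mathcal C)\le s$ for all $v$, and $s\ge 2$, then there is a bridgeless $\mathcal C^*$ with $\mathcal C\subseteq\mathcal C^*\subseteq G$ such that $\rho_G(v,\mathcal C^*)\le s-1$ for all $v$ and every strong orientation of $\mathcal C$ of diameter $d$ extends to a strong orientation of $\mathcal C^*$ of diameter at most $d+4s$. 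Granting this, I would apply it with $\mathcal C=H$ and then invoke the inductive hypothesis for $s-1$ on the pair $(G,\mathcal C^*)$ (legitimate since $s-1\ge 1$); the resulting orientation of $G$ extends $\ori H$ and has diameter at most $(d+4s)+4\binom{s}{2}=d+4\binom{s+1}{2}$, using $\binom{s}{2}+s=\binom{s+1}{2}$. Iterating all the way down, the total cost telescopes to $\sum_{j=1}^{s}4j=4\binom{s+1}{2}$.

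To prove the reduction I would build $\mathcal C^*$ by repeatedly attaching short ears. Maintain a bridgeless subgraph $\mathcal C'$, initially $\mathcal C$. While some vertex is at distance exactly $s$ from $\mathcal C'$ (it cannot be farther, as $\mathcal C'\supseteq\mathcal C$), pick such a $w$ and a shortest $w$–$\mathcal C'$ path $P=w\,a_1\cdots a_s$ with $a_s\in V(\mathcal C')$ and no other vertex of $P$ in $\mathcal C'$. Since $G$ is bridgeless, $\deg_G(w)\ge 2$, so $w$ has a neighbour $b\ne a_1$, and because $\mathcal C'$ has covering radius at most $s$ there is a shortest $b$–$\mathcal C'$ path of length at most $s$. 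Concatenating $P$ reversed, the edge $wb$, and this $b$-path yields a closed walk from $V(\mathcal C')$ to $V(\mathcal C')$ of length at most $2s+1$; shortcutting repeated vertices turns it into a genuine open or closed ear $E$ of length at most $2s+1$ whose interior avoids $\mathcal C'$. Either $E$ passes through $w$, so $w$ joins $\mathcal C'$, or the shortcut cut $w$ off at some $a_i$ with $1\le i\le s-1$, so that after adding $E$ we have $a_i\in V(\mathcal C')$ and hence $\rho_G(w,\mathcal C')\le i\le s-1$; in either case $w$ is fixed. Adding $E$ preserves bridgelessness (its edges lie on a cycle through a $\mathcal C'$-path between its endpoints, or $E$ is itself a cycle), and adds at least one new vertex, so the process halts with a bridgeless $\mathcal C^*$ of covering radius at most $s-1$ obtained from $\mathcal C$ by attaching pairwise interior-disjoint ears, each of length at most $2s+1$.

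Finally, orient $\mathcal C^*$ by keeping $\ori{\mathcal C}$ and directing each ear monotonically from one endpoint to the other (cyclically, for a closed ear); strong connectivity is immediate. For the diameter: a vertex at position $i$ on an ear $E$ is reached by going into $V(\mathcal C)$ (at most $d$ steps, or at most $d+2s$ if one starts inside another ear and must first run it out, since ears have length $\le 2s+1$), then within $\ori{\mathcal C}$ to the entry endpoint of $E$, then $i\le 2s$ steps along $E$; and from that vertex one reaches anything by running the at most $2s$ remaining steps of $E$ into $V(\mathcal C)$, then at most $d$ steps in $\ori{\mathcal C}$, then at most $2s$ steps into one more ear. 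Tallying, every directed distance is at most $d+4s$, exactly as the reduction requires. The main obstacle is the construction in the middle paragraph: verifying that the ``shortcut a short closed walk to a useful ear'' argument goes through in every configuration — especially when the girth is small, so that $P$ and the $b$-path may overlap heavily or create short cycles — and double-checking that monotone ear orientations really hold the increase to $4s$ rather than something larger. This is where the delicate parts of the known $s=1$ and $s=2$ arguments reside, and the accumulated cost $4\binom{s+1}{2}=2s^2+2s$ is what produces the quadratic bound.
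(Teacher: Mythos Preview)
Your inductive framework is the same as the paper's: reduce the covering radius by one at a cost of $4s$ to the oriented diameter and then telescope $\sum_{j=1}^{s}4j=4\binom{s+1}{2}$. That part is fine, and your appeal to Lemma~\ref{Fomin} for the base case is appropriate.

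The gap is in the one-step reduction, precisely where you flag it. Your ears are attached to the \emph{growing} subgraph $\mathcal C'$, so a later ear $E_j$ may have one or both endpoints on an earlier ear $E_i$ rather than in $\mathcal C$. Your diameter count assumes that ``running an ear out'' lands you in $V(\mathcal C)$ after at most $2s$ steps; but running out of $E_j$ only lands you in $\mathcal C'$, possibly on the interior of $E_i$, and you then still have to traverse part of $E_i$. With ears nested $k$ deep the bound becomes $d+4sk$, not $d+4s$, and nothing in your construction prevents $k$ from growing. Choosing the two legs of the walk to end in $\mathcal C$ rather than $\mathcal C'$ does not save this, because the shortcutting that makes the interior avoid $\mathcal C'$ can move the endpoints onto earlier ears.

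The paper confronts exactly this stacking problem, but from the other end: it processes the distance-$1$ layer $V_1$ rather than the distance-$s$ layer, and, crucially, it does \emph{not} orient each added path blindly. It maintains the invariant that every vertex already absorbed satisfies $\rho_{\ori{H'}}(v,H)+\rho_{\ori{H'}}(H,v)\le 2i$ at ``step $i$''. When a new path first meets the current $\ori{H'}$ at a vertex $v'$ that is not in $H$, the paper looks at the imbalance between $\rho_{\ori{H'}}(v',H)$ and $\rho_{\ori{H'}}(H,v')$ and orients the new segment toward or away from $v'$ accordingly, so that the invariant is preserved for the newly added vertices. This adaptive orientation is what keeps the cost at $4s$ despite stacking; your monotone-ear orientation throws this information away. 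To repair your argument you need either to guarantee depth-one attachment (which your construction does not) or to replace ``orient each ear monotonically'' by an orientation rule that uses the already-established oriented distances at the attachment points, as the paper does.
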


\begin{proof}
	Let $H\subseteq G$ be a bridgeless subgraph with an orientation $\ori{H}$ such that $\diam\left(\ori{H}\right)=d$ and $\rho_G(v,H)\leq k$ for all $v\in V(G)$. Let $V_1:=\{v \mid \rho_G(v,H)=1\}.$ Given a vertex $v\in V_1$, label one of its neighbors in $H$ as $x$. Let $\ori{H'}=\ori{H}$, we will continue to augment $\ori{H'}$ throughout the proof. We will call $\ori{H'}$ \emph{extendable} at step $i$ if for any $v\in V(\ori{H'}),$  $\rho_{\ori{H'}}(v,H)+\rho_{\ori{H'}}(H,v)\leq 2i$ and $\rho_{G}(H,v)\leq i$.


	Assume there is a vertex $z$ for which $\rho_G(\ori{H},z)=s$.  First, we will show that there exists a graph $\ori{H'}$ that is extendable at step 1. If there is a vertex $v\in V_1\setminus V\left(\ori{H'}\right)$ for which $\rho_{G\setminus vx}(v,H)=1$, there exists some vertex $y$, $y\neq x$ for which $vy\in E(G)$. Let $\ori{H'}=\ori{H}\cup \ori{xvy}$. Repeat this until there are no longer vertices $v\in V_1\setminus V\left(\ori{H'}\right)$ for which $\rho_{G\setminus vx}(v,H)=1$. Note that for any $v\in V\left(\ori{H'}\right),$  $\rho_{\ori{H'}}(v,H)+\rho_{\ori{H'}}(H,v)\leq 2$ and $\rho_{G}(v,H)\leq 1$, so $\ori{H'}$ is extendable at step 1.

	We will show that for any $1\leq i<2s$, if $\ori{H'}$ is extendable at step $i$, then it is also extendable at step $i+1.$ If there is a vertex $v\in V_1\setminus V\left(\ori{H'}\right)$ for which $\rho_{G\setminus vx}(v,H)=i$, let $Q$ be a path of length $i$ from $v$ to $H$ which does not include $vx$. Consider a vertex $v'\in V(Q)$ for which $v'\in V\left(\ori{H'}\right)$ and $\rho_{G\setminus vx}(v',v)$ is minimized. If $v'\in V(H)$, add $\ori{Q}\cup \ori{xv}$ to $\ori{H'}$. See that for all $v\in V\left(\ori{H'}\right)$, $\rho_{\ori{H'}}(v,H)+\rho_{\ori{H'}}(H,v)\leq 2i$ and $\rho_{G}(v,H)\leq i$, so $\ori{H'}$ is extendable at step $i$. 

	If $v'\notin V(H),$ let $Q'$ be the subpath of $Q$ from $v$ to $v'$. Since $\ori{H'}$ is extendable at step $i$, there exists an integer $j$ for which $|j|<i$, $\rho_{\ori{H'}}(v',H)\leq i-j$, and $\rho_{\ori{H'}}(H,v')\leq i+j$. If $j\geq 0$, add $\oriback{Q'}\cup \ori{vx}$ to $\ori{H'}.$ If $j<0$, add $\ori{Q'}\cup \ori{xv}$ to $\ori{H'}.$ See in each case that for all $v\in V\left(\ori{H'}\right)$, $\rho_{\ori{H'}}(v,H)+\rho_{\ori{H'}}(H,v)\leq 2i$ and $\rho_{G}(v,H)\leq i$.

	Once we have an extendable subgraph $\ori{H'}$ at step $2s$, and have considered all vertices $v\in V_1\setminus V\left(\ori{H'}\right)$ for which $\rho_{G\setminus e}(v,H)\leq 2s$, there are no more vertices $v\in V_1\setminus V\left(\ori{H'}\right)$. If there were a vertex $v\in V_1\setminus V\left(\ori{H'}\right)$ for which $\rho_{G\setminus e}(v,H)>2s$, notice that this would mean there exists a vertex $v'\in V(G)$ for which $\rho_G(v',H)> s$, a contradiction to the assumption of the lemma.

	Since $\ori{H'}$ was extendable at step $2s$, for any $v\in V\left(\ori{H'}\right)$, $\rho_{\ori{H'}}(v,H)\leq 2s$ and $\rho_{\ori{H'}}(H,v)\leq 2s$, so $\ordiam(H')\leq \ordiam(H)+4s$.

\end{proof}

We will now prove Theorem \ref{finalTheorem}.

\begin{proof}
	In Lemma \ref{mainlemma} we showed that there is a bridgeless subgraph $H_k\subseteq G$ such that for any $v\in V(G)$, $\rho_G(v,H_k)\leq Lg$ and \[
		\ordiam(H_k)\leq (2g+\varepsilon) \frac{n}{h(\delta,g)}.
	\]
	By a combination of this and Lemma \ref{extension} with $s=L\cdot g$, we find

	\[
		\ordiam(G)\leq \ordiam(H_k)+\sum_{i=1}^{Lg} 4i \leq (2g+\varepsilon) \frac{n}{h(\delta,g)}+4\binom{Lg+1}{2}.
	\]
\end{proof}

\begin{corollary}
	In Theorem \ref{finalTheorem}, if $g=3$ and $0<\varepsilon <1$, \[
		\ordiam(G)\leq(2g+\varepsilon) \frac{n}{h(\delta,g)}+4\binom{Lg+1}{2}<7 \frac{n}{\delta+1}+O(1).
	\]
\end{corollary}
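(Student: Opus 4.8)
The plan is to obtain the corollary as a direct specialization of Theorem \ref{finalTheorem}, so the only work is tracking each quantity when $g=3$. First I would evaluate $h(\delta,3)$. Since $\lfloor\frac{g-1}{2}\rfloor = 1$ for $g=3$, the index set of the sum $\sum_{i=1}^{\lfloor\frac{g-1}{2}\rfloor-1}$ in the definition $h(\delta,g)=1+\delta+\sum_{i=1}^{\lfloor\frac{g-1}{2}\rfloor-1}\delta(\delta-3)^i$ is $\{1,\dots,0\}=\emptyset$, whence $h(\delta,3)=1+\delta=\delta+1$; this is consistent with the claim in Theorem \ref{finalTheorem} that $h$ has degree $\lfloor\frac{g-1}{2}\rfloor=1$. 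Consequently the leading term of the bound from Theorem \ref{finalTheorem} becomes $(2g+\varepsilon)\frac{n}{h(\delta,g)}=(6+\varepsilon)\frac{n}{\delta+1}$.

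Next I would check that the additive term of Theorem \ref{finalTheorem} is $O(1)$. Recall $L=\lceil\frac{g-1}{\varepsilon}\rceil=\lceil\frac{2}{\varepsilon}\rceil$, which depends on the fixed choice of $\varepsilon$ but not on $n$ or $\delta$. Hence $Lg=3\lceil\frac{2}{\varepsilon}\rceil$ is a constant, and so is $4\binom{Lg+1}{2}=4\binom{3\lceil 2/\varepsilon\rceil+1}{2}$; denote this constant by $c(\varepsilon)$. Theorem \ref{finalTheorem} then reads $\ordiam(G)\leq(6+\varepsilon)\frac{n}{\delta+1}+c(\varepsilon)$, which is exactly the middle expression in the displayed chain of the corollary.

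Finally I would deduce the strict inequality. Because $0<\varepsilon<1$ we have $6+\varepsilon<7$, and since $n\geq 1$ and $\delta+1\geq 1$ this gives $(6+\varepsilon)\frac{n}{\delta+1}<7\frac{n}{\delta+1}$; absorbing $c(\varepsilon)$ into the $O(1)$ yields $\ordiam(G)<7\frac{n}{\delta+1}+O(1)$, establishing the corollary and, in particular, improving the bound of Surmacs.

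I do not anticipate a genuine obstacle: all the substance lives in Theorem \ref{finalTheorem}, and what remains is bookkeeping. The two points worth an explicit sentence are (i) that the hidden constant in $O(1)$ depends only on $\varepsilon$ (it does grow as $\varepsilon\to 0^{+}$ through $L$, but this is harmless since $\varepsilon$ is chosen once and for all), and (ii) that the hypothesis $\varepsilon<1$ is used only to force $6+\varepsilon<7$, so that any $\varepsilon$ in $(0,1)$ gives a strict improvement over the coefficient $7$.
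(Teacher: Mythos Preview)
Your proposal is correct and is exactly the intended specialization; the paper states the corollary without a separate proof, treating it as immediate from Theorem~\ref{finalTheorem}, and your computations of $h(\delta,3)=\delta+1$, the constancy of $4\binom{Lg+1}{2}$ for fixed $\varepsilon$, and $6+\varepsilon<7$ are precisely the checks that justify it.
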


This is an improvement on the current bound by Surmacs \cite{surmacs_improved_2017}. It is still left as an open question whether this is the smallest possible upper bound in the case without girth. The same question could be asked when including girth as well.

\section{Acknowledgments}

I would like to thank Peter Dankelmann, \'Eva Czabarka, and L\'aszl\'o Sz\'ekely for their mentorship and guidance. I would also like to thank Peter Dankelmann for originally introducting me to this problem and Zhiyu Wang for helping me edit the paper prior to submission.

\bibliographystyle{plain}

\bibliography{References.bib}

\end{document}